\newtheorem{theorem}{Theorem}
\newtheorem{lemma}[theorem]{Lemma}
\newenvironment{proof}{\noindent{\bf Proof}\hspace{0.5em}}
    { \null  \hfill $\square$ \par}
\renewcommand{\S}{\mathcal S}
\newcommand{\ES}{{\mathbb X}}
\newcommand{\EY}{\mathbb Y}
\newcommand{\EZ}{\mathbb Z}
\newcommand{\st}{:}
\newcommand\GF{{\rm GF}}
\newcommand\PG{{\rm PG}}
\renewcommand\setminus{\backslash}
\begin{document}
%
%

\title{Hyper-reguli in $\PG(5,q)$}

 \date{\today}

\author{S.G. Barwick and Wen-Ai Jackson
\date{\today}
\\ Department of Pure Mathematics, University of Adelaide\\
Adelaide 5005, Australia
}

\maketitle

Corresponding Author: Dr Susan Barwick, University of Adelaide, Adelaide
5005, Australia. Phone: +61 8 8313 3983, Fax: +61 8 8313 3696, email:
susan.barwick@adelaide.edu.au

Keywords: circle geometry, covers, hyper-reguli

AMS code: 51E20

\begin{abstract} 
A simple counting argument is used to show that for all $q$, an Andr\'e hyper-regulus $\ES$ in $\PG(5,q)$ has exactly two switching sets. 
Moreover, there are exactly $2(q^2+q+1)$ planes in $\PG(5,q)$ that meet every plane of $\ES$ in a point, namely the planes in the switching sets. 
\end{abstract}

Bruck~\cite{bruc73b} investigated covers of the circle geometry $CG(3,q)$ and showed a cover corresponds to a set $\ES$ in  $\PG(5,q)$. The set $\ES$  consists  of $q^2+q+1$
 mutually disjoint planes, and  has 
  two {\em switching sets} $\EY,\EZ$, with the property that two planes from different sets ($\ES$, $\EY$ or $\EZ$) meet in a unique point, and two planes from the same set are disjoint. 
Ostrom \cite{ostrom} called these three sets {\em Andr\'e hyper-reguli}, and posed the question of whether there are any other hyper-reguli that cover the same set of points as $\ES$. Pomareda \cite{poma97} used algebraic techniques to show that there are no other switching sets of $\ES$ when 
 the largest power of $3$ that divides $q-1$ is 1. In this article we
prove that this result holds for all $q$ using a simple counting argument. Moreover, we 
show  that not only are there no more hyper-reguli for $\ES$, but there are no further planes in $\PG(5,q)$ that meet each plane of $\ES$ in a point.

\begin{lemma}\label{numberofcovers}
The number of covers of the circle geometry $CG(3,q)$ is $\frac12q^3(q-1)(q^3+1).$
\end{lemma}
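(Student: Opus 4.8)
The plan is to pass to the field-reduction model and count covers by the algebraic data that specifies them. First I would identify the points of $\PG(1,q^3)$ over $\GF(q^3)$ with the $q^3+1$ planes of a Desarguesian plane-spread $\D$ of $\PG(5,q)$, so that $\PGL(2,q^3)$ acts on $\PG(5,q)$ preserving $\D$. In this model a cover is an Andr\'e net specified by two \emph{carrier} points $P,Q$ of $\PG(1,q^3)$ together with a norm coset: if $K_{P,Q}$ denotes the cyclic subgroup of order $q^2+q+1$ of the torus of $\PGL(2,q^3)$ fixing $P$ and $Q$, then $K_{P,Q}$ partitions the remaining $q^3-1$ points into $q-1$ orbits, and these orbits are exactly the covers with carriers $\{P,Q\}$. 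I would record this description carefully (it is the standard Andr\'e picture underlying Bruck's correspondence), in particular checking that every cover $\ES$ really does arise as such an orbit.

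Granting the description, the count is a single product. The number of unordered carrier pairs $\{P,Q\}$ is $\binom{q^3+1}{2}=\tfrac12 q^3(q^3+1)$, and for each fixed pair the number of orbits of $K_{P,Q}$ equals the index of $K_{P,Q}$ in the torus, i.e. the number of values of the norm map $N\colon\GF(q^3)^{*}\to\GF(q)^{*}$. As $N$ is surjective with kernel $K_{P,Q}$ of order $(q^3-1)/(q-1)=q^2+q+1$, there are exactly $q-1$ cosets. Multiplying gives $\tfrac12 q^3(q^3+1)(q-1)$, as required. The same total drops out of orbit--stabiliser: $\PGL(2,q^3)$ is transitive on covers, $|\PGL(2,q^3)|=q^3(q^3-1)(q^3+1)$, and the stabiliser of a cover has order $2(q^2+q+1)$.

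The step that needs real work---and the one I expect to be the main obstacle---is that this parametrisation is a genuine bijection, so that the product does not overcount. Surjectivity, and the fact that distinct cosets for a fixed pair give distinct covers, are immediate; the content is that a cover $\ES$ recovers its carrier pair $\{P,Q\}$. I would argue that any collineation fixing $\ES$ conjugates $K_{P,Q}$ to a cyclic group of order $q^2+q+1$ again acting regularly on the $q^2+q+1$ points of $\ES$, and then show $\ES$ admits only one such regular cyclic group; as $K_{P,Q}$ is then fixed, so is its pair of fixed points $\{P,Q\}$. Equivalently I would compute the stabiliser of $\ES$ directly, showing it is generated by $K_{P,Q}$ and an involution that interchanges $P,Q$ and preserves the norm coset (this involution is why the carrier pair is unordered, and is the source of the factor $\tfrac12$), so has order exactly $2(q^2+q+1)$. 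Verifying that the stabiliser is no larger than this---equivalently, that the carriers cannot be recovered in a second way---is the only delicate point, after which both the bijection and the orbit--stabiliser count follow immediately.
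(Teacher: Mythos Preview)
Your proposal is correct and reaches the same total by essentially the same parameter count, but packaged differently from the paper. The paper simply quotes Bruck's classification of covers into Type~I, $\{x:N(x-a)=f\}$, and Type~II, $\{x:N((x-a)/(x-b))=f\}$, and then counts parameters: $q^3(q-1)$ covers of Type~I, and $\tfrac12 q^3(q^3-1)(q-1)$ of Type~II after halving for the symmetry $(a,b,f)\leftrightarrow(b,a,1/f)$; summing gives the result. Your version unifies the two types by working projectively on $\PG(1,q^3)$: a Type~I cover is just the case where one carrier is $\infty$, so the single product $\binom{q^3+1}{2}(q-1)$ replaces the two-term sum, and the arithmetic is identical.

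The genuine difference is in what each argument assumes. The paper takes for granted (from Bruck) that distinct parameter choices yield distinct covers, so the count is a two-line computation. You instead make this injectivity explicit and propose to prove it by computing the stabiliser of a cover in $\PGL(2,q^3)$ and showing it has order exactly $2(q^2+q+1)$, so that the carrier pair is recoverable. That is a sound plan and makes your argument more self-contained, at the cost of doing group-theoretic work that the paper simply outsources to the reference; the orbit--stabiliser version you sketch is an equally valid way to close the count.
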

\begin{proof}
In \cite{bruc73b}, it is shown that every cover of $CG(3,q)$ can be represented as either 
I: $\{x\in\GF(q^3)\st N(x-a)=f\}$, for some $a\in\GF(q^3)$, and $f\in\GF(q)\setminus\{0\}$; or II: 
$\{x\in\GF(q^3)\cup\{\infty\}\st N\left(\frac{x-a}{x-b}\right)=f\}$, for some $a\neq b\in\GF(q^3)$, and $f\in\GF(q)\setminus\{0\}$,
where $N$ is the norm from $\GF(q^3)$ to $\GF(q)$, that is $N(x)=x^{q^2+q+1}$. We count the number of covers, firstly,
there are $q^3(q-1)$ covers of type I. 
To count covers of type II, note that if $N((x-a)/(x-b))=f$, then $N((x-b)/(x-a))=1/f$. So the number of covers of type II is $\frac12q^3(q^3-1)(q-1)$. 
\end{proof}

\begin{theorem}\label{covers-unique}
 Let $\ES$ be an Andr\'e hyper-regulus in $\PG(5,q)$. Then there are {\em exactly} $2(q^2+q+1)$ planes that meet every plane of $\ES$, namely the planes in the two switching sets of $\ES$. 
\end{theorem}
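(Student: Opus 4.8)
The plan is to pass to the Desarguesian model underlying Bruck's correspondence, reduce the problem to a statement about linearized polynomials, and then identify the two switching sets with the two nontrivial powers of Frobenius. Throughout, write $n=q^2+q+1$ and let $\mathcal P$ denote the $n^2$ points covered by the $n$ mutually disjoint planes of $\ES$.

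First I would record the observation that pins every relevant plane inside $\mathcal P$. If a plane $T$ meets every plane of $\ES$ in a point, then, the planes of $\ES$ being pairwise disjoint, $T$ contains at least one point from each of the $n$ planes; since $T$ has only $n$ points it meets each plane of $\ES$ in \emph{exactly} one point, and $T\subseteq\mathcal P$. Thus a transversal plane is precisely a plane lying in $\mathcal P$ that selects one point from each member of $\ES$. After applying a collineation I may assume $\ES$ is the standard André net: identifying $\PG(5,q)$ with the $\GF(q)$-space $\GF(q^3)\times\GF(q^3)$, taking the spread planes to be $S_m=\{(x,mx):x\in\GF(q^3)\}$ together with $\{(0,y)\}$, and setting $\ES=\{S_m:N(m)=f\}$, the covered set is $\mathcal P=\{\langle(x,y)\rangle:x\neq0,\ N(y)=fN(x)\}$. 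As no point of $\mathcal P$ has first coordinate $0$, projection to the first coordinate restricts to a linear bijection of any transversal plane $T$ onto $\GF(q^3)$; hence $T$ is the graph $\{(x,\phi(x)):x\in\GF(q^3)\}$ of a $\GF(q)$-linear bijection $\phi$, and $T\subseteq\mathcal P$ becomes the norm identity
\[
N(\phi(x))=f\,N(x)\qquad\text{for all }x\in\GF(q^3).
\]

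Writing $\phi$ as a linearized polynomial $\phi(x)=a_0x+a_1x^{q}+a_2x^{q^2}$, I would check that the two monomials $\phi(x)=cx^{q}$ and $\phi(x)=cx^{q^2}$ with $N(c)=f$ satisfy this identity and, crucially, \emph{do} meet every $S_m$ with $N(m)=f$ in one point; these furnish the $n$ planes of $\EY$ and the $n$ planes of $\EZ$, the two families are disjoint, so at least $2n$ transversal planes exist. The main obstacle, and the heart of the theorem, is the converse: the norm identity must force $\phi$ to be a monomial $cx^{q^i}$. I would reformulate it by dividing through, so that the ``slope'' $\phi(x)/x=a_0+a_1x^{q-1}+a_2x^{q^2-1}$ is required to have norm $f$ for every $x\neq0$. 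As $x$ runs over $\GF(q^3)\setminus\{0\}$ the power $u=x^{q-1}$ runs over the kernel of the norm map (a cyclic group of order $n$) and $x^{q^2-1}=u^{q+1}$, so the requirement becomes that the expression $a_0+a_1u+a_2u^{q+1}$ have norm $f$ at every $n$th root of unity $u$. Clearing the norm turns this into a polynomial identity in $u$ of controlled degree holding at all $n$ such roots; a degree count then forces all but one of $a_0,a_1,a_2$ to vanish.

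Finally the monomial $\phi(x)=cx$ (the case $i=0$) must be discarded, because its graph is the plane $S_c\in\ES$, which meets the other planes of $\ES$ in no point; this leaves exactly the $2n$ planes of $\EY\cup\EZ$. I expect to invoke Lemma~\ref{numberofcovers} here as the global consistency check: the three families obtained over $i=0,1,2$ account for $3n$ monomial solutions and hence for exactly three André hyper-reguli on each point set, which reconciles with the total of $\tfrac12q^3(q-1)(q^3+1)$ covers and confirms that $\ES$ has precisely two switching sets and exactly $2(q^2+q+1)$ transversal planes.
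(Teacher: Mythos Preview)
Your route is genuinely different from the paper's. The paper never touches linearized polynomials: it fixes a regular $2$-spread $\S\supseteq\ES$, uses Lemma~\ref{numberofcovers} to count the Andr\'e hyper-reguli inside $\S$, multiplies by the $2(q^2+q+1)$ known switching-set transversals of each, and compares this lower bound with an \emph{exact} count of the ``Type~B'' planes of $\PG(5,q)$ (those meeting $q^2+q+1$ spread elements each in a point), obtained by subtracting the spread planes and the planes meeting a spread element in a line from the total number of planes. The two numbers agree, so no hyper-regulus can carry an extra transversal. This double count is uniform in $q$ and uses Lemma~\ref{numberofcovers} in an essential way; by contrast your argument is a direct classification for a fixed $\ES$ and does not need that lemma at all --- your final paragraph invoking it as a ``global consistency check'' is not doing any logical work and should be dropped.

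For $q\ge 3$ your outline goes through. After clearing denominators the expression $A(u)\,(uB(u))\,(u^{q+1}C(u))-fu^{q+2}$ (with $A(u)=a_0+a_1u+a_2u^{q+1}$ and $B,C$ its Frobenius twists) has degree at most $3(q+1)<q^2+q+1$, hence vanishes identically; reading off the constant term gives $a_0a_2^qa_1^{q^2}=0$, and one more coefficient then kills a second $a_i$. This is a short coefficient chase rather than a pure ``degree count'', but it works. The genuine gap is $q=2$. There $N\colon\GF(8)^*\to\GF(2)^*$ is identically $1$, so your norm identity $N(\phi(x))=fN(x)$ holds for \emph{every} $\GF(2)$-linear bijection of $\GF(8)$; there are $|\GL(3,2)|=168$ of these and only $21$ monomials, so the implication ``norm identity $\Rightarrow$ monomial'' is false as stated. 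The theorem is still true for $q=2$ --- the $147$ non-monomial bijections give planes contained in $\mathcal P$ that meet some $S_m$ in a line and hence are not transversals --- but your argument, which only uses the containment $T\subseteq\mathcal P$, cannot distinguish them. You would need an ad hoc verification for $q=2$ or a strengthening of the algebraic condition to encode the full transversal property, whereas the paper's counting handles all $q$ at once.
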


\begin{proof} 
Let $\S$ be a regular 2-spread containing $\ES$. By \cite{bruc73b}, the planes of $\S$ correspond to points of the circle geometry $CG(3,q)$, and the covers of $CG(3,q)$ are equivalent to the  Andr\'e hyper-reguli contained in $\S$.
Moreover by \cite{bruc73b},  given an Andr\'e hyper-reguli 
$\ES$ contained in $\S$, there are at least $2(q^2+q+1)$ planes
that meet every plane of $\ES$ in a point (namely the planes in the two switching sets). The number of covers of $CG(3,q)$ is counted in Lemma~\ref{numberofcovers}, hence the number $x$ of planes of $\PG(5,q)$ that meet $q^2+q+1$ planes of $\S$ in a point is
at least $y=\frac12 q^3(q-1)(q^3+1)\times 2(q^2+q+1)$.

We now count the number $x$ exactly. As the 2-spread $\S$ covers all the points of $\PG(5,q)$, 
we can partition the planes of $\PG(5,q)$  into three types with respect to $\S$. Type A consists of planes of $\S$; Type B consists of planes that meet $q^2+q+1$ elements of $\S$ in exactly one point; and Type C consists of planes that  meet one element of $\S$ in a line (and so meet $q^2$ elements of $\S$ in exactly one point).  
We note that planes of type B and C determine linear sets of $\PG(1,q^3)$ of rank 3.
We count the number of planes of each type. There are $q^3+1$ planes of type A. To count planes of type C, note that  there are $q^3+1$ choices for a plane of $\S$, each contains $q^2+q+1$ lines, and each of these lies in $q^3+q^2+q$ planes not in $\S$. Hence there are $q(q^3+1)(q^2+q+1)^2$ planes of type C. 
The total number of planes in $\PG(5,q)$ is $(q^3+1)(q^2+1)(q^4+q^3+q^2+q+1)$. Hence the remaining 
$x=q^3(q^3+1)(q^3-1)$ planes are of type B. 

Thus we have $x=y$, that is, each plane of $\PG(5,q)$ that meets  $q^2+q+1$ planes of $\S$ lies in an Andr\'e hyper-regulus of $\S$, or in one of the two known switching sets of an Andr\'e hyper-regulus of $\S$. 
Thus there are no other planes of $\PG(5,q)$ that meet each plane of an Andr\'e hyper-regulus.
\end{proof}

\end{document}